\documentclass[preprint,12pt]{elsarticle}

\usepackage{graphicx}%
\usepackage{multirow}%

\usepackage{amsthm}%
\usepackage{mathrsfs}%
\usepackage{xcolor}%
\usepackage{textcomp}%
\usepackage{manyfoot}%
\usepackage{booktabs}%
\usepackage{listings}%

\usepackage{mathtools}
\usepackage{amsmath, amssymb}%
\newtheorem{theorem}{Theorem}
\newtheorem{lemma}[theorem]{Lemma} % lemma 与 theorem 共享编号
\allowdisplaybreaks

\usepackage[mathscr]{eucal}

\usepackage{mismath}

\usepackage{hyperref}

\usepackage{esint}

\graphicspath{{figs/}}
\usepackage{pgf}
\usepackage{pgfplots}
\pgfplotsset{compat=1.14}

\usepackage{tikz}
\usetikzlibrary{arrows.meta}
\tikzset{%
>={Latex[width=2mm,length=2mm]},
base/.style = {rectangle, rounded corners, draw=black, minimum width=2cm, minimum height=0.75cm, text centered, font=\sffamily}, activityStarts/.style = {base, fill=blue!30}, startstop/.style = {base, fill=red!30}, activityRuns/.style = {base, fill=green!30}, process/.style = {base, minimum width=2.5cm, fill=orange!15}, }

\usetikzlibrary{math}
\usetikzlibrary{calc}

\usepackage{subcaption}

\usepackage{makecell}
\setcellgapes{2pt}

\usepackage{booktabs}

\usepackage{siunitx}
\usepackage{algorithm}
\usepackage{algpseudocode}
\algrenewcommand\algorithmiccomment[2][\normalsize]{{#1\hfill #2}}

\usepackage[capitalise]{cleveref}

\usepackage{easyReview}
\usepackage{xcolor}
\definecolor{MyColor1}{HTML}{BFCCB5}
\definecolor{MyColor2}{HTML}{7C96AB}
\definecolor{MyColor3}{HTML}{B7B7B7}
\definecolor{MyColor4}{HTML}{EDC6B1}

\makeatletter
\def\ps@pprintTitle{%
  \let\@oddhead\@empty
  \let\@evenhead\@empty
  \let\@oddfoot\@empty
  \let\@evenfoot\@empty
}
\makeatother

\begin{document}
  \begin{frontmatter}
    \title{An algebraic multiscale preconditioner for large sparse SPD matrices}

    %% Title, authors and addresses

    %% use the tnoteref command within \title for footnotes;
    %% use the tnotetext command for theassociated footnote;
    %% use the fnref command within \author or \address for footnotes;
    %% use the fntext command for theassociated footnote;
    %% use the corref command within \author for corresponding author footnotes;
    %% use the cortext command for theassociated footnote;
    %% use the ead command for the email address,
    %% and the form \ead[url] for the home page:
    %% \title{Title\tnoteref{label1}}
    %% \tnotetext[label1]{}
    \author[CHUK]{Yingjie Zhou}

    \author[EIAS]{Shubin Fu\corref{cor1}}
    \cortext[cor1]{Corresponding author.} \ead{sfu@eitech.edu.cn}

    \author[CUHK]{Eric T. Chung}
    %% \ead[url]{home page}
    %% \fntext[label2]{}

    %% \fntext[label3]{}

    %% use optional labels to link authors explicitly to addresses:
    %% \author[label1,label2]{}
    %% \affiliation[label1]{organization={},
    %%             addressline={},
    %%             city={},
    %%             postcode={},
    %%             state={},
    %%             country={}}
    %%
    %% \affiliation[label2]{organization={},
    %%             addressline={},
    %%             city={},
    %%             postcode={},
    %%             state={},
    %%             country={}}

    %% \author{}

    \affiliation[CUHK]{ organization={Department of Mathematics, The Chinese University of Hong Kong}, city={Shatin}, country={Hong Kong SAR} }

    \affiliation[EIAS]{ organization={Eastern Institute for Advanced Study},
    %%    addressline={},
    city={Ningbo}, postcode={315200}, state={Zhejiang}, country={PR China} }

    \begin{abstract}
          We present a two-grid algebraic multiscale preconditioner for large sparse symmetric
    positive definite systems arising from elliptic problems with highly heterogeneous
    coefficients. The coarse space is constructed directly from the system
    matrix by graph partitioning and local generalized eigenvalue solvers, yielding
    basis functions that capture the low-energy modes responsible for slow convergence.
    The method requires no geometric information, making it suitable for unstructured
    and matrix-only settings, and its construction is naturally parallelizable. Numerical
    results for heterogeneous Darcy flow problems show robustness with respect to
    coefficient contrast and problem size, better performance than standard
    algebraic multigrid on challenging large-scale cases, and good parallel scalability.
    \end{abstract}

    %% Graphical abstract
    %% \begin{graphicalabstract}
    %% \includegraphics{grabs}
    %% \end{graphicalabstract}

    %% Research highlights
    %% \begin{highlights}
    %% \item Research highlight 1
    %% \item Research highlight 2
    %% \end{highlights}

    \begin{keyword}
      %% keywords here, in the form: keyword \sep keyword
      preconditioner\sep Darcy flow\sep nested multiscale space

      %% PACS codes here, in the form: \PACS code \sep code

      %% MSC codes here, in the form: \MSC code \sep code
      \MSC 65N55\sep 65F08\sep 65F10
      %% or \MSC[2008] code \sep code (2000 is the default)
    \end{keyword}
  \end{frontmatter}

  \section{Introduction}
  \label{sec:introduction}

  In this paper, we consider the linear system of equations
  \begin{equation}
    \label{eq:matrix-form}\mathsf{A}\mathsf{u}= \mathsf{f},
  \end{equation}
  where $\mathsf{A}\in \mathbb{R}^{n \times n}$ is the system matrix, $\mathsf{f}$
  is the source term, and $\mathsf{u}$ is the solution vector to be determined. A
  significant challenge arises from the complex geological structures of the subsurface,
  which introduce highly heterogeneous and anisotropic coefficients into the
  PDEs. This often results in a severely ill-conditioned matrix $\mathsf{A}$,
  posing a substantial hurdle for numerical solvers.

  For large-scale problems, direct solvers are often computationally infeasible due
  to their prohibitive memory and computational demands. Consequently, iterative
  methods based on Krylov subspaces, such as the Conjugate Gradient (CG) or
  GMRES methods, are the preferred choice. However, the convergence rate of these
  solvers is highly dependent on the condition number of the matrix $\mathsf{A}$.
  Without an effective preconditioner, their performance can be unacceptably slow,
  particularly for problems with high-contrast material properties.

  Among the vast landscape of preconditioning techniques, multigrid methods are distinguished
  by their potential for optimal or near-optimal computational complexity. These
  methods accelerate convergence by addressing error components across multiple
  scales, smoothing high-frequency errors on fine grids and correcting low-frequency
  errors on coarser grids. Traditional geometric multigrid (GMG) methods, however,
  require a hierarchy of well-defined geometric grids, which can be difficult to
  construct for problems with complex geometries and may not effectively capture
  the multiscale nature of the underlying physics in heterogeneous media.

  Algebraic multigrid (AMG) methods offer a more flexible alternative by constructing
  coarse levels based solely on the algebraic properties of the matrix $\mathsf{A}$,
  eliminating the need for geometric information. While powerful, standard AMG
  approaches can still falter when faced with strong anisotropy or large
  discontinuities in coefficients, as their coarsening strategies, based on
  algebraic connectivity, may not align with the physical characteristics of the
  problem.

  To address these challenges, multiscale methods have emerged as a powerful
  paradigm. Techniques such as the multiscale finite element method (MsFEM) \cite{hou1997multiscale},
  the generalized multiscale finite element method (GMsFEM)
  \cite{chung2023multiscale, efendiev2013generalized}, and the multiscale finite
  volume method \cite{jenny2003multi, hajibeygi2009multiscale} have proven effective.
  Adapting these techniques to construct preconditioners is a promising strategy
  for accelerating iterative solvers
  \cite{fu2024adaptive, ye2024robust, ye2025highly, zhou2024robust}. Furthermore,
  these multiscale concepts can be formulated in a purely algebraic manner
  \cite{heinlein2025algebraic, wang2014algebraic}, making them broadly applicable
  to problems on unstructured meshes or scenarios where only the system matrix is
  available.

  In this work, we build upon these principles to develop a novel two-level
  algebraic multiscale preconditioner. The core innovation of our method is the construction
  of a problem-dependent coarse space through purely algebraic means.
  Specifically, we partition the graph of the matrix $\mathsf{A}$ to define local
  subdomains and then construct multiscale basis functions by solving generalized
  eigenvalue problems within each subdomain. These basis functions are designed
  to capture the low-energy modes responsible for slow convergence. By incorporating
  these functions into a two-level framework, our preconditioner effectively
  handles the complex, multiscale interactions induced by heterogeneous coefficients
  without requiring any geometric information.

  The main contributions of this paper are threefold: (1) the development of a
  fully algebraic and parallelizable framework for constructing multiscale
  coarse spaces; (2) the systematic integration of local spectral information to
  create robust basis functions for high-contrast Darcy flow problems; and (3)
  extensive numerical validation demonstrating the robustness and scalability of
  the proposed method, showing superior performance compared to standard AMG for
  challenging large-scale problems.

  The remainder of this paper is organized as follows. Section~\ref{sec:model-problem}
  provides an overview of the two-level preconditioning framework and the model
  problem. Section~\ref{sec:methods} details the construction of our algebraic
  multiscale preconditioner, including the domain partitioning and the
  formulation of local eigenvalue problems. In Section~\ref{sec:numerical-experiments},
  we present a series of numerical experiments to evaluate the performance and
  robustness of the proposed method. Finally, Section~\ref{sec:conclusion}
  offers concluding remarks and discusses potential directions for future
  research.

  \section{Problem setting}
  \label{sec:model-problem} Systems from \Cref{eq:matrix-form} frequently arise
  in computational science and engineering applications, particularly in the numerical
  simulation of physical phenomena governed by elliptic partial differential equations.

  More specificially, these linear systems typically originate from the
  discretization of variational problems that model diverse physical processes
  including groundwater flow, oil reservoir simulation, heat conduction, structural
  mechanics, and electromagnetic field computations. The mathematical foundation
  of these problems can be formulated as follows: Find $p \in V$ such that
  \begin{equation}
    \label{eq:variational}a_{\Omega}(p, v) = l(v) \quad \forall v \in V,
  \end{equation}
  where $V$ is a suitable Hilbert space (typically $H^{1}(\Omega)$ or a subspace
  thereof), $a(\cdot, \cdot): V \times V \to \mathbb{R}$ is a symmetric,
  continuous, and coercive bilinear form, $l(\cdot): V \to \mathbb{R}$ is a
  continuous linear functional, and $\Omega \subset \mathbb{R}^{d}$ ($d=2,3$) represents
  the computational domain with appropriate boundary conditions.

  A prototypical example of such variational problems, which is central to our
  study, is the steady-state diffusion equation with heterogeneous coefficients:
  \begin{equation}
    a_{\Omega}(p, v) = \int_{\Omega}\kappa(x) \nabla p \cdot \nabla v \, d\mathsf{x}
    , \quad \text{and}\quad l(v) = \int_{\Omega}f(x) v \, d\mathsf{x},
  \end{equation}
  where $\kappa(x)$ is a spatially varying diffusion tensor that is symmetric
  and uniformly positive definite, and $f(x)$ represents distributed source terms.
  In the context of subsurface flow applications, $p$ typically represents the pressure
  field, $\kappa$ corresponds to the permeability tensor characterizing the
  porous medium, and $f$ represents sources and sinks.

  Upon discretization using finite element, finite difference, or finite volume methods,
  the variational problem \eqref{eq:variational} leads to the linear system \eqref{eq:matrix-form}.
  A fundamental challenge in solving \eqref{eq:matrix-form} arises when the
  coefficient $\kappa(x)$ exhibits strong heterogeneity, leading to severe ill-conditioning
  of the matrix $\mathsf{A}$. This situation is particularly prevalent in
  applications involving high-contrast media arising from porous media flow and
  composite materials, where the coefficient values can span several orders of magnitude
  across the computational domain.
  % In such scenarios, the condition number $\kappa(\mathsf{A}) = \lambda_{\max}(\mathsf{A})/\lambda_{\min}(\mathsf{A})$ can become extremely large, often scaling with the contrast ratio of the coefficients. This ill-conditioning poses significant computational challenges, as standard iterative solvers such as the conjugate gradient (CG) method exhibit convergence rates that deteriorate with increasing condition number. Specifically, the convergence rate of CG is bounded by $(\sqrt{\kappa(\mathsf{A})}- 1)/(\sqrt{\kappa(\mathsf{A})}+ 1)$, which approaches unity as $\kappa(\mathsf{A})$ increases, leading to prohibitively slow convergence or even practical stagnation.

  \section{Methods}
  \label{sec:methods} In this section, we present a two-level algebraic
  multiscale preconditioner designed to efficiently solve the large, sparse, and
  ill-conditioned linear system \eqref{eq:matrix-form}. Our approach is rooted
  in the principles of domain decomposition and multiscale methods, aiming to construct
  a robust solver that is effective even for challenging scenarios, such as
  those involving high-contrast media. The fundamental idea is to build a problem-dependent
  coarse space that accurately captures the low-frequency (or algebraically
  smooth) error components that are responsible for the slow convergence of standard
  iterative methods. Unlike traditional geometric multigrid methods, which rely on
  a hierarchy of geometrically coarsened grids, our coarse space is constructed
  algebraically. This is achieved by solving local generalized eigenvalue problems
  on a set of overlapping subdomains that partition the computational domain.
  The eigenvectors corresponding to the smallest eigenvalues of these local
  problems represent the low-energy modes of the system and are used as basis
  functions for the coarse space. By incorporating these multiscale basis functions,
  the resulting preconditioner can effectively handle the complex interactions
  across different scales induced by the heterogeneous coefficients, leading to a
  substantial improvement in solver performance.

  \subsection{Two-level algebraic multiscale preconditioner}
  The construction of our two-level preconditioner commences with a decomposition
  of the computational domain $\Omega$ into a set of non-overlapping coarse subdomains,
  denoted by $\{\Omega_{i}\}_{i=1}^{N}$, where $N$ represents the total number of
  these subdomains. A key feature of our method is that this partitioning is
  performed in a purely algebraic manner, directly leveraging the information encoded
  in the system matrix $\mathsf{A}$. Specifically, we operate on the graph associated
  with $\mathsf{A}$, where the nodes correspond to the degrees of freedom and
  the edges represent the couplings between them. This algebraic approach obviates
  the need for any explicit geometric information or a hierarchy of grids,
  making the method particularly well-suited for problems defined on complex, unstructured
  meshes or when only the matrix itself is available. The resulting subdomains
  are therefore determined by the connectivity and strength of connections
  within the matrix, ensuring that the partitioning is adapted to the underlying
  physical problem.

  \begin{figure*}[!ht]
    \centering
    \begin{subfigure}
      [b]{0.4\textwidth}
      \centering
      \includegraphics[width=\textwidth]{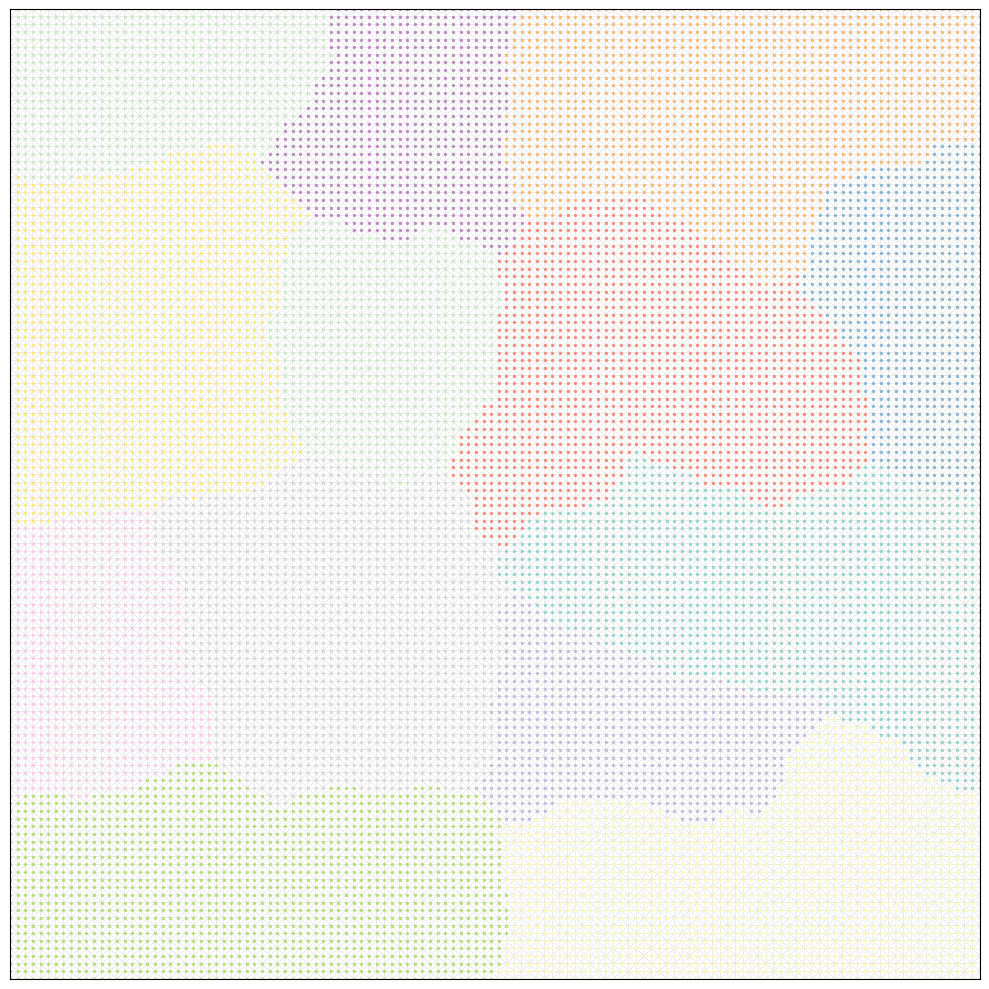}
      \caption{2D graph partition for $128^{2}$ grid}
    \end{subfigure}
    \hfill
    \begin{subfigure}
      [b]{0.45\textwidth}
      \centering
      \includegraphics[width=\textwidth]{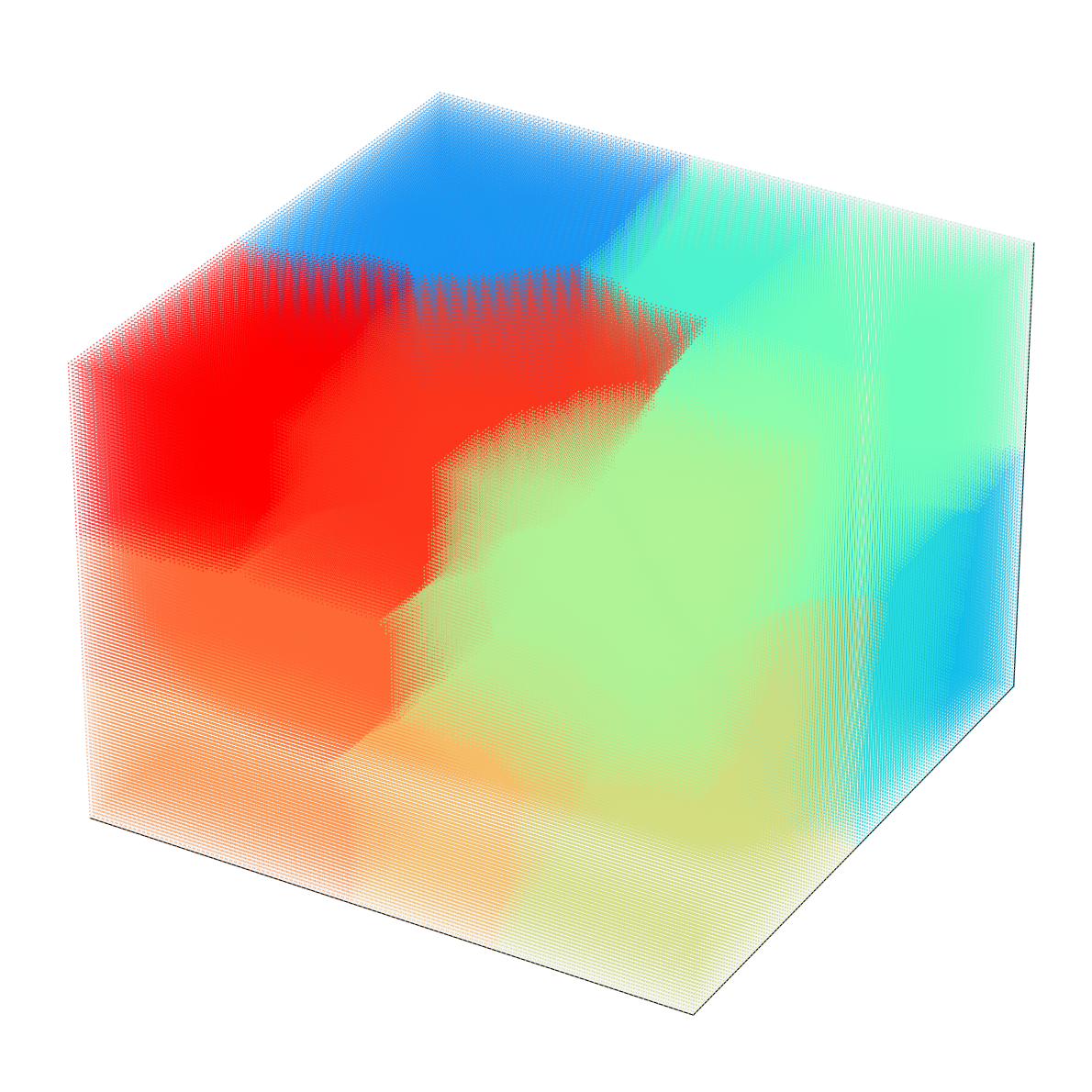}
      \caption{3D graph partition for $128^{3}$ grid}
    \end{subfigure}
    \caption{Graph partition examples using METIS to get 64 subdomains of the
    grid.}
    \label{fig:domain-decomposition}
  \end{figure*}

  Following the domain decomposition, we construct the coarse space, which is
  fundamental to the multiscale nature of our preconditioner. This space is spanned
  by a set of locally supported basis functions, $\{\Phi_{i}\}_{i=1}^{N_c}$, where
  $N_{c}$ is the dimension of the coarse space. Each basis function $\Phi_{i}$
  is associated with a coarse subdomain $\Omega_{i}$ and has a compact support, typically
  confined to an overlapping region constructed around $\Omega_{i}$. The
  resulting coarse space is formally defined as the span of these basis
  functions:
  \[
    W_{H}^{\text{c}}\coloneqq\text{span}\{\Phi_{i}\}_{i=1}^{N_c}.
  \]
  The relationship between the coarse space and the original fine-scale space is
  established through a prolongation operator, denoted by
  $\mathsf{R}_{0}^{\intercal}$. The columns of this operator are the vector representations
  of the coarse basis functions $\{\Phi_{i}\}$ in the fine-scale discretization.
  This operator maps a vector from the coarse space to its equivalent
  representation in the fine-scale space. Subsequently, the coarse-scale system
  matrix $\mathsf{A}_{0}$ is formulated via the Galerkin projection of the fine-scale
  operator $\mathsf{A}$ onto the coarse space: $\mathsf{A}_{0}\coloneqq \mathsf{R}
  _{0}\mathsf{A}\mathsf{R}_{0}^{\intercal}$, where $\mathsf{R}_{0}$ is the
  corresponding restriction operator (the transpose of $\mathsf{R}_{0}^{\intercal}$).
  The efficacy of the entire two-level scheme hinges critically on the properties
  of this coarse space. An ideal coarse space should be of low dimensionality to
  ensure that the coarse problem remains computationally inexpensive, yet it must
  be rich enough to accurately approximate the low-energy modes of the system that
  are poorly attenuated by standard relaxation smoothers. Therefore, the design of
  a compact and effective coarse space is the most pivotal aspect of developing a
  robust and efficient two-level preconditioner.

  The fine-scale linear system \eqref{eq:matrix-form} will be solved by
  preconditioned iterative solvers with a two-level multiscale preconditioner of
  the form:
  \[
    \mathsf{P}^{-1}= \mathsf{R}_{0}^{\intercal}\mathsf{A}_{0}^{\dagger}\mathsf{R}
    _{0}.
  \]
  where $\mathsf{A}_{0}^{\dagger}$ is the pseudo-inverse of $\mathsf{A}_{0}$.

  \subsection{Multiscale coarse spaces}
  The cornerstone of our proposed method lies in the systematic construction of a
  multiscale coarse space, $W_{H}^{\text{c}}$, which is meticulously designed to
  capture the low-energy modes of the system matrix $\mathsf{A}$. These modes, often
  referred to as algebraically smooth components, are responsible for the slow
  convergence of standard iterative solvers. To this end, we adopt and adapt the
  methodology of the Generalized Multiscale Finite Element Method (GMsFEM), which
  excels at generating problem-dependent basis functions by solving local spectral
  problems.

  In the standard GMsFEM, these basis functions are derived from a local
  generalized eigenvalue problem defined on each subdomain $\Omega_{i}$: find $\psi
  _{k}^{(i)}\in V(\Omega_{i})$ and $\lambda_{k}^{(i)}\in \mathbb{R}$ such that
  \begin{equation*}
    \label{eq:eigenproblem}a_{i}(\psi_{k}^{(i)}, v) = \lambda_{k}^{(i)}s_{i}(\psi
    _{k}^{(i)}, v), \quad \forall v \in V(\Omega_{i}),
  \end{equation*}

  %这里加上我们实际求解的不是这个方程
  where $V(\Omega_{i})$ is a local function space, and the bilinear forms $a_{i}(
  \cdot, \cdot)$ and $s_{i}(\cdot, \cdot)$ are formulated based on the underlying
  partial differential equation and boundary conditions.

  Our approach translates this concept into a purely algebraic framework,
  obviating the need for geometric information. The local stiffness matrix $\mathsf{A}
  _{i}$ for each subdomain $\Omega_{i}$ is obtained by restricting the global matrix
  $\mathsf{A}$ to the degrees of freedom within that subdomain, i.e.,
  $\mathsf{A}_{i}\coloneqq \mathsf{R}_{i}\mathsf{A}\mathsf{R}_{i}^{\intercal}$, where
  $\mathsf{R}_{i}$ is the corresponding Boolean restriction operator. To ensure
  the resulting local problem is well-posed and physically meaningful, we construct
  a modified matrix, $\widehat{\mathsf{A}_i}$, from $\mathsf{A}_{i}$. This is
  achieved by adjusting the diagonal entries so that each diagonal element
  becomes the negative sum of the off-diagonal elements in its row. This procedure
  enforces that the row sums of $\widehat{\mathsf{A}_i}$ are zero, making it a
  symmetric positive semi-definite matrix. This is algebraically analogous to imposing
  homogeneous Neumann boundary conditions on the local problem, which is crucial
  for capturing the low-frequency behavior without being constrained by
  artificial boundary effects. The matrix $\mathsf{S}_{i}$ is then defined as a diagonal
  matrix whose entries are the diagonal elements of this modified matrix $\widehat
  {\mathsf{A}_i}$. It serves as a weighting or mass matrix, scaling the eigenvalue
  problem appropriately. This leads to the algebraic generalized eigenvalue
  problem:
  \begin{equation}
    \label{eq:algebraic-eigenproblem}\widehat{\mathsf{A}}_{i}\boldsymbol{\psi}_{k}
    ^{(i)}= \lambda_{k}^{(i)}\mathsf{S}_{i}\boldsymbol{\psi}_{k}^{(i)}.
  \end{equation}
  The eigenvectors $\{\boldsymbol{\psi}_{k}^{(i)}\}$ corresponding to the smallest
  eigenvalues of \cref{eq:algebraic-eigenproblem} represent the local low-energy
  modes. These are precisely the modes that are poorly attenuated by classical
  relaxation methods. By incorporating them into our coarse space, we can
  effectively address these problematic components on a global scale. We select the
  first $m_{i}$ eigenvectors from each subdomain $\Omega_{i}$ to form the basis for
  our coarse space. These local vectors are then extended by zero to the global
  fine-grid space and assembled as the columns of the prolongation operator
  $\mathsf{R}_{0}^{\intercal}$:
  \begin{equation}
    \mathsf{R}_{0}^{\intercal}= [\boldsymbol{\psi}_{1}^{(1)}, \dots, \boldsymbol{\psi}
    _{m_1}^{(1)}, \boldsymbol{\psi}_{1}^{(2)}, \dots, \boldsymbol{\psi}_{m_2}^{(2)}
    , \dots, \boldsymbol{\psi}_{1}^{(N_c)}, \dots, \boldsymbol{\psi}_{m_{N_c}}^{(N_c)}
    ].
  \end{equation}
  This construction ensures that the coarse space is inherently adapted to the properties
  of the underlying operator, making it highly effective for resolving
  challenging solution components. The purely algebraic nature of this process renders
  our method broadly applicable, particularly for problems defined on
  unstructured meshes or in scenarios where only the system matrix is accessible.

  % \begin{algorithm}[!ht]
  %   \caption{Two-grid preconditioner for solving $\mathsf{A}\mathsf{x} = \mathsf{f}$}
  %   \label{alg:two-grid}
  %   \begin{algorithmic}[1]
  %     \State Given initial guess $\mathsf{x}^0$, \textbf{do}
  %     \State $\mathsf{x}^1 = \mathsf{x}^0 + \mathsf{S}(\mathsf{f} - \mathsf{A}\mathsf{x}^0)$ \Comment{Pre-smoothing, $\mathsf{S}$ is a smoother}
  %     \State $\mathsf{r}_1 = \mathsf{R}(\mathsf{f} - \mathsf{A}\mathsf{x}^1)$ \Comment{Compute coarse-grid residual}
  %     \State Solve $(\mathsf{R}\mathsf{A}\mathsf{R}^T)\mathsf{x}_c = \mathsf{r}_1$ \Comment{coarse-grid correction}
  %     \State $\mathsf{x}^2 = \mathsf{x}^1 + \mathsf{R}^T\mathsf{x}_c$ \Comment{Project coarse-grid correction to fine grid}
  %     \State $\mathsf{x}^3 = \mathsf{x}^2 + \mathsf{S}(\mathsf{f} - \mathsf{A}\mathsf{x}^2)$ \Comment{Post-smoothing, $\mathsf{S}$ is a smoother}
  %   \end{algorithmic}
  % \end{algorithm}

  \section{Analysis}
  \label{sec:anal}

  For iterative solvers, a smaller iteration number usually means a better performance.
  In this section, we will analyze the convergence rate of the proposed two-level
  method following \cite{xu2017algebraic}.

  \begin{theorem}
    \label{thm:condition} Suppose $\mathsf{A}$ and $\mathsf{P}$ be symmetric positive
    definite matrices. Let $\mathsf{E}= \mathsf{I}- \mathsf{P}^{-1}\mathsf{A}$. Then,
    the estimation of $\|\mathsf{E}\|_{\mathsf{A}}$ is equivalent to the
    estimation of the condition number $\kappa(\mathsf{P}^{-1}\mathsf{A})$. Specifically,
    if $\|\mathsf{E}\|_{\mathsf{A}}\le \delta < 1$, then
    \[
      \kappa(\mathsf{P}^{-1}\mathsf{A}) \le \frac{1+\delta}{1-\delta}.
    \]
  \end{theorem}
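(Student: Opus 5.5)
The plan is to use the fact that $\mathsf{P}^{-1}\mathsf{A}$ is self-adjoint with respect to the $\mathsf{A}$-inner product $(\mathsf{x},\mathsf{y})_{\mathsf{A}} = \mathsf{y}^{\intercal}\mathsf{A}\mathsf{x}$. Then $\|\mathsf{E}\|_{\mathsf{A}}$ is simply the spectral radius of $\mathsf{E}$, and everything reduces to bounding the eigenvalues of $\mathsf{P}^{-1}\mathsf{A}$.

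First I will check self-adjointness:
\[
(\mathsf{P}^{-1}\mathsf{A}\mathsf{x},\mathsf{y})_{\mathsf{A}} = \mathsf{y}^{\intercal}\mathsf{A}\mathsf{P}^{-1}\mathsf{A}\mathsf{x},
\]
which is symmetric in $\mathsf{x},\mathsf{y}$ because $\mathsf{P}^{-1}$ is symmetric. It follows that $\mathsf{E}=\mathsf{I}-\mathsf{P}^{-1}\mathsf{A}$ is also $\mathsf{A}$-self-adjoint.

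The eigenvalues $\lambda_j$ of $\mathsf{P}^{-1}\mathsf{A}$ are real and positive. One way to see this is that $\mathsf{P}^{-1}\mathsf{A}$ is similar to the SPD matrix $\mathsf{A}^{1/2}\mathsf{P}^{-1}\mathsf{A}^{1/2}$. Moreover, there is an $\mathsf{A}$-orthonormal eigenbasis. In that basis the $\mathsf{A}$-norm is diagonalized, so
\[
\|\mathsf{E}\|_{\mathsf{A}} = \max_j |1-\lambda_j|.
\]

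The condition number in question is the one natural for an $\mathsf{A}$-self-adjoint operator, namely
\[
\kappa(\mathsf{P}^{-1}\mathsf{A}) = \frac{\lambda_{\max}}{\lambda_{\min}},
\]
which equals the spectral condition number of $\mathsf{A}^{1/2}\mathsf{P}^{-1}\mathsf{A}^{1/2}$. If $\|\mathsf{E}\|_{\mathsf{A}}\le\delta<1$, then every eigenvalue satisfies $|1-\lambda_j|\le\delta$, that is, $1-\delta\le\lambda_j\le 1+\delta$. In particular $\lambda_{\min}\ge 1-\delta>0$, and the ratio is at most $(1+\delta)/(1-\delta)$.

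For the claimed ``equivalence'' I will also note the converse. Given the spectral bounds $c_0\le\lambda_j\le c_1$, one can rescale $\mathsf{P}$ by the factor $(c_0+c_1)/2$. This gives
\[
\|\mathsf{E}\|_{\mathsf{A}}\le\frac{\kappa-1}{\kappa+1}<1,
\]
so bounds on the contraction and bounds on the condition number determine each other.

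I expect the only point needing care to be the first step, the self-adjointness and the diagonalization in the $\mathsf{A}$-inner product, which is what justifies replacing the operator norm by the spectral radius. The rest is elementary.
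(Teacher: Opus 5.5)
Your proposal is correct and follows essentially the same route as the paper. Both arguments establish $\mathsf{A}$-self-adjointness from the symmetry of $\mathsf{A}\mathsf{P}^{-1}\mathsf{A}$, get real positive eigenvalues from the similarity to $\mathsf{A}^{1/2}\mathsf{P}^{-1}\mathsf{A}^{1/2}$, deduce $\|\mathsf{E}\|_{\mathsf{A}}=\max_j|1-\lambda_j|$, and conclude $1-\delta\le\lambda_{\min}\le\lambda_{\max}\le 1+\delta$. Your explicit reading of $\kappa$ as $\lambda_{\max}/\lambda_{\min}$ and your rescaling argument for the converse, which is correct, go slightly beyond the paper; the paper uses that meaning of $\kappa$ only implicitly and proves just the stated implication, despite its wording about ``equivalence''.
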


  \begin{proof}
    Let $\mathsf{M}= \mathsf{P}^{-1}\mathsf{A}$. Since $\mathsf{A}$ and
    $\mathsf{P}$ are symmetric positive definite, $\mathsf{M}$ is similar to the
    symmetric positive definite matrix $\widetilde{\mathsf{M}}= \mathsf{A}^{1/2}\mathsf{P}
    ^{-1}\mathsf{A}^{1/2}$. Thus, $\mathsf{M}$ has real, positive eigenvalues
    denoted by $\lambda_{i}$. We first show that $\mathsf{E}$ is self-adjoint with
    respect to the $\mathsf{A}$-inner product, defined as
    $\langle \mathsf{x}, \mathsf{y}\rangle_{\mathsf{A}}= \mathsf{x}^{T}\mathsf{A}
    \mathsf{y}$. The self-adjoint property requires $\langle \mathsf{E}\mathsf{x}
    , \mathsf{y}\rangle_{\mathsf{A}}= \langle \mathsf{x}, \mathsf{E}\mathsf{y}\rangle
    _{\mathsf{A}}$ for all $\mathsf{x}, \mathsf{y}$. This is equivalent to
    \[
      \mathsf{x}^{T}\mathsf{E}^{T}\mathsf{A}\mathsf{y}= \mathsf{x}^{T}\mathsf{A}\mathsf{E}
      \mathsf{y}\iff \mathsf{E}^{T}\mathsf{A}= \mathsf{A}\mathsf{E}.
    \]
    Since $\mathsf{A}$ is symmetric, the condition becomes that $\mathsf{A}\mathsf{E}$
    must be symmetric. Substituting
    $\mathsf{E}= \mathsf{I}- \mathsf{P}^{-1}\mathsf{A}$, we have
    \[
      \mathsf{A}\mathsf{E}= \mathsf{A}(\mathsf{I}- \mathsf{P}^{-1}\mathsf{A})= \mathsf{A}
      - \mathsf{A}\mathsf{P}^{-1}\mathsf{A}.
    \]
    Since $\mathsf{A}$ and $\mathsf{P}$ (and thus $\mathsf{P}^{-1}$) are symmetric,
    $\mathsf{A}\mathsf{P}^{-1}\mathsf{A}$ is symmetric, which implies $\mathsf{A}
    \mathsf{E}$ is symmetric. Therefore, $\mathsf{E}$ is self-adjoint in the $\mathsf{A}$-inner
    product, and
    $\|\mathsf{E}\|_{\mathsf{A}}= \rho(\mathsf{E})= \max_{i}|1 - \lambda_{i}|$. If
    $\|\mathsf{E}\|_{\mathsf{A}}\le \delta$, then $|1 - \lambda_{i}| \le \delta$
    for all $i$, which implies $1 - \delta \le \lambda_{\min}\le \lambda_{\max}\le
    1 + \delta$. Consequently,
    \[
      \kappa(\mathsf{P}^{-1}\mathsf{A}) = \frac{\lambda_{\max}}{\lambda_{\min}}\le
      \frac{1+\delta}{1-\delta}.
    \]
  \end{proof}
  Let $\mathsf{S}$ be the block diagonal matrix induced by the local matrices
  $\{ \mathsf{S}_{i}\}_{i=1}^{N}$ on the non-overlapping partition
  $\{\Omega_{i}\}_{i=1}^{N}$. We define the corresponding weighted norms by
  \[
    \|x\|_{\mathsf{S}}^{2}= x^{T}\mathsf{S}x, \qquad \|x_{i}\|_{\mathsf{S}_{i}}^{2}
    = x_{i}^{T}\mathsf{S}_{i}x_{i}.
  \]
  \begin{lemma}
    For any set of local vectors $\{x_{i}\}_{i=1}^{N}$ where $x_{i}\in \mathbb{R}
    ^{n_{i}}$, the following equality holds:

    \[
      \left\|\sum_{i=1}^{N}\mathsf{R}_{i}^{\intercal}x_{i}\right\|_{\mathsf{S}}^{2}
      = \sum_{i=1}^{N}\|x_{i}\|_{\mathsf{S}_{i}}^{2}
    \]
  \end{lemma}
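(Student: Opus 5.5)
The argument is a direct block-structure computation on the non-overlapping partition $\{\Omega_{i}\}_{i=1}^{N}$. I first record the facts about the Boolean restriction operators that this partition gives. Each $\mathsf{R}_{i}$ selects the degrees of freedom of $\Omega_{i}$, and the subdomains are disjoint and cover all degrees of freedom. Hence, after a permutation ordering the unknowns subdomain by subdomain,
\[
\mathsf{R}_{i}\mathsf{R}_{j}^{\intercal} = \delta_{ij}\mathsf{I}_{n_i},
\qquad
\sum_{i=1}^{N}\mathsf{R}_{i}^{\intercal}\mathsf{R}_{i} = \mathsf{I}.
\]
By definition, the block diagonal matrix $\mathsf{S}$ induced by $\{\mathsf{S}_{i}\}$ is then $\mathsf{S} = \sum_{i=1}^{N}\mathsf{R}_{i}^{\intercal}\mathsf{S}_{i}\mathsf{R}_{i}$. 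Combining this with the orthogonality above gives the key identity
\[
\mathsf{R}_{i}\mathsf{S}\mathsf{R}_{j}^{\intercal} = \delta_{ij}\mathsf{S}_{i}.
\]

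Next I expand the left-hand side bilinearly:
\[
\Bigl\|\sum_{i}\mathsf{R}_{i}^{\intercal}x_{i}\Bigr\|_{\mathsf{S}}^{2}
= \sum_{i,j} x_{i}^{T}\,\mathsf{R}_{i}\mathsf{S}\mathsf{R}_{j}^{\intercal}\,x_{j}.
\]
Substituting $\mathsf{R}_{i}\mathsf{S}\mathsf{R}_{j}^{\intercal} = \delta_{ij}\mathsf{S}_{i}$ kills every cross term with $i\ne j$. What remains is $\sum_{i} x_{i}^{T}\mathsf{S}_{i}x_{i} = \sum_{i}\|x_{i}\|_{\mathsf{S}_{i}}^{2}$, which is the claim.

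There is no real analytical obstacle. The only point needing care is making precise that the partition is non-overlapping, so that $\mathsf{R}_{i}\mathsf{R}_{j}^{\intercal}=0$ for $i\neq j$, and that $\mathsf{S}$ is assembled with exactly these $\mathsf{R}_{i}$. In particular, the lemma would fail for the overlapping supports used for the basis functions, so it must be stated for the non-overlapping $\{\Omega_{i}\}$. The diagonality of each $\mathsf{S}_{i}$ is not needed; the block structure alone suffices.
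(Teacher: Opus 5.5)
Your proof is correct and takes the same approach as the paper. Both arguments rest on the fact that, on the non-overlapping partition, the vectors $\mathsf{R}_{i}^{\intercal}x_{i}$ have disjoint supports, so the block diagonal $\mathsf{S}$ produces no cross terms; your identity $\mathsf{R}_{i}\mathsf{S}\mathsf{R}_{j}^{\intercal}=\delta_{ij}\mathsf{S}_{i}$ writes out explicitly the step the paper states in words.
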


  \begin{proof}
    Let $v = \sum_{i=1}^{N}\mathsf{R}_{i}^{\intercal}x_{i}$. By definition of
    the $\mathsf{S}$-norm,
    \[
      \|v\|_{\mathsf{S}}^{2}= v^{T}\mathsf{S}v.
    \]
    Since the subdomains are non-overlapping and $\mathsf{R}_{i}^{\intercal}$ extends
    a local vector by zero outside $\Omega_{i}$, the supports of
    $\mathsf{R}_{i}^{\intercal}x_{i}$ are disjoint. Hence there are no cross terms
    in the quadratic form associated with the block diagonal matrix $\mathsf{S}$,
    and
    \[
      \|v\|_{\mathsf{S}}^{2}= \sum_{i=1}^{N}(\mathsf{R}_{i}^{\intercal}x_{i})^{T}
      \mathsf{S}(\mathsf{R}_{i}^{\intercal}x_{i}) = \sum_{i=1}^{N}x_{i}^{T}\mathsf{S}
      _{i}x_{i}= \sum_{i=1}^{N}\|x_{i}\|_{\mathsf{S}_{i}}^{2}.
    \]
  \end{proof}

  \begin{lemma}
    For each $x\in \mathbb{R}^{n}$, there exists vectors
    $x_{1}, x_{2}, \cdots, x_{N}$ such that
    $x = \sum_{i=1}^{N}\mathsf{R}_{i}^{\intercal}x_{i}$ and
    \[
      \sum_{i=1}^{N}\|x_{i}\|_{\widehat{\mathsf{A}}_{i}}^{2}\leq \|x\|_{\mathsf{A}}
      ^{2}.
    \]
  \end{lemma}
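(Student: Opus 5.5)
The natural choice is the restriction decomposition $x_{i}\coloneqq \mathsf{R}_{i}x$ for $i=1,\dots,N$. Since $\{\Omega_{i}\}_{i=1}^{N}$ is a non-overlapping partition of the degrees of freedom, $\sum_{i}\mathsf{R}_{i}^{\intercal}\mathsf{R}_{i}=\mathsf{I}$. Hence $x=\sum_{i}\mathsf{R}_{i}^{\intercal}x_{i}$ holds trivially, as in the proof of the previous lemma. The whole task is then the energy inequality. I would prove it by writing both sides as sums over the edges of the matrix graph and comparing them term by term.

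\textbf{Step 1 (edge form of the local energies).} By construction, $\widehat{\mathsf{A}}_{i}$ keeps the off-diagonal entries $a_{jk}$ with $j,k\in\Omega_{i}$ and has diagonal $-\sum_{k\in\Omega_{i},k\neq j}a_{jk}$. So every row of $\widehat{\mathsf{A}}_{i}$ sums to zero. The standard identity for zero-row-sum symmetric matrices then gives
\[
  x_{i}^{T}\widehat{\mathsf{A}}_{i}x_{i}=\sum_{\substack{j<k\\ j,k\in\Omega_i}}(-a_{jk})(x_{j}-x_{k})^{2}.
\]

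\textbf{Step 2 (edge form of the global energy).} Let $r_{j}=\sum_{k}a_{jk}$ denote the $j$-th row sum of $\mathsf{A}$. The same computation applied to $\mathsf{A}$ gives
\[
  x^{T}\mathsf{A}x=\sum_{j<k}(-a_{jk})(x_{j}-x_{k})^{2}+\sum_{j}r_{j}x_{j}^{2}.
\]

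\textbf{Step 3 (comparison).} Summing Step 1 over $i$ gives exactly the edge terms of Step 2 whose endpoints lie in a common subdomain. Therefore
\[
  \|x\|_{\mathsf{A}}^{2}-\sum_{i}\|x_{i}\|_{\widehat{\mathsf{A}}_{i}}^{2}=\sum_{\substack{j<k\\ j,k\text{ in different }\Omega_i}}(-a_{jk})(x_{j}-x_{k})^{2}+\sum_{j}r_{j}x_{j}^{2}.
\]
This difference is nonnegative whenever every off-diagonal entry satisfies $a_{jk}\le 0$ and every row sum satisfies $r_{j}\ge 0$.

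\textbf{Main obstacle.} The difficulty is not computational but lies in the hypotheses. The inequality is false for a general SPD matrix; a positive off-diagonal entry across an interface already breaks it. The same sign condition is also what makes $\widehat{\mathsf{A}}_{i}$ positive semi-definite, as the paper asserts. I would therefore state explicitly the structural assumption the paper implicitly uses: $\mathsf{A}$ is a symmetric $Z$-matrix with nonnegative row sums, i.e.\ weakly diagonally dominant. This is exactly the situation for two-point flux finite volume or $P_{1}$ finite element discretizations of the Darcy problem on suitable meshes, and under it Step 3 closes the proof.
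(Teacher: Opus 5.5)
Your proposal is correct and follows the paper's proof essentially step by step. Both use the restriction decomposition $x_i=\mathsf{R}_i x$, write $\|x_i\|_{\widehat{\mathsf{A}}_i}^2$ and $\|x\|_{\mathsf{A}}^2$ as edge sums (plus row-sum terms for the global form), and split the global edge sum into interior and cut edges. The structural hypothesis you state explicitly (a symmetric $Z$-matrix with nonnegative row sums) is exactly the one the paper invokes inside its proof when it appeals to $\mathsf{A}$ being an elliptic stiffness matrix, and you are right that the inequality can fail for a general SPD matrix.
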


  \begin{proof}
    For each subdomain $\Omega_{i}$, let $x_{i}= \mathsf{R}_{i}x$ be the
    restriction of $x$ to $\Omega_{i}$. Then we trivially have $x = \sum_{i=1}^{N}
    \mathsf{R}_{i}^{\intercal}x_{i}$.

    Recall that $\widehat{\mathsf{A}}_{i}$ is constructed by modifying the diagonal
    entries of the local matrix such that its row sums are zero. For symmetric
    matrices with non-positive off-diagonal entries (M-matrices), the associated
    quadratic form can be written as a sum over edges:
    \[
      \|x_{i}\|_{\widehat{\mathsf{A}}_{i}}^{2}= \frac{1}{2}\sum_{\substack{j,k \in \Omega_i \\ j \neq k}}
      (-\mathsf{A}_{jk}) (x_{j}- x_{k})^{2}.
    \]
    Summing over all subdomains gives the total energy associated with all
    interior edges:
    \[
      \sum_{i=1}^{N}\|x_{i}\|_{\widehat{\mathsf{A}}_{i}}^{2}= \sum_{i=1}^{N}\sum_{\substack{j,k \in \Omega_i \\ j < k}}
      (-\mathsf{A}_{jk}) (x_{j}- x_{k})^{2}.
    \]
    On the other hand, the global energy norm $\|x\|_{\mathsf{A}}^{2}$ includes
    contributions from all edges in the graph as well as diagonal terms:
    \[
      \|x\|_{\mathsf{A}}^{2}= \sum_{j=1}^{n}\left(\sum_{k=1}^{n}\mathsf{A}_{jk}\right
      )x_{j}^{2}+ \sum_{j < k}(-\mathsf{A}_{jk})(x_{j}- x_{k})^{2}.
    \]
    Since $\mathsf{A}$ is a stiffness matrix derived from an elliptic problem,
    we have $-\mathsf{A}_{jk}\ge 0$ for $j \neq k$ and the row sums
    $\sum_{k}\mathsf{A}_{jk}\ge 0$ (by weak diagonal dominance). Decomposing the
    edge sum into interior edges (where both nodes are in the same $\Omega_{i}$)
    and cut edges (where nodes belong to different subdomains), we obtain:
    \[
      \sum_{j < k}(-\mathsf{A}_{jk})(x_{j}- x_{k})^{2}= \sum_{i=1}^{N}\sum_{\substack{j,k \in \Omega_i \\ j < k}}
      (-\mathsf{A}_{jk}) (x_{j}- x_{k})^{2}+ \sum_{\text{cut edges}}(-\mathsf{A}_{jk}
      )(x_{j}- x_{k})^{2}.
    \]
    Comparing the expressions, we see that
    $\sum_{i=1}^{N}\|x_{i}\|_{\widehat{\mathsf{A}}_{i}}^{2}$ corresponds exactly
    to the first term of the global energy decomposition. Since the contributions
    from cut edges and row sums are non-negative, it follows immediately that:
    \[
      \sum_{i=1}^{N}\|x_{i}\|_{\widehat{\mathsf{A}}_{i}}^{2}\le \|x\|_{\mathsf{A}}
      ^{2}.
    \]
  \end{proof}
  Let $E$ denote the error propagation operator of the two-level method. Following
  the abstract convergence theorem \cite{xu2017algebraic}, we have
  \begin{theorem}
    \label{thm:convergence} Let
    \[
      \mu_{c}\coloneqq \min_{1 \le i \le N}\lambda_{m_i+1}^{(i)},
    \]
    where $\lambda_{m_i+1}^{(i)}$ is the first local eigenvalue not included in the
    coarse space on $\Omega_{i}$. Then the error propagation operator $E$ of the
    two-level method satisfies
    \[
      \|E\|_{\mathsf{A}}^{2}\le 1 - \frac{\mu_{c}}{C},
    \]
    where $C$ is a fixed constant.
  \end{theorem}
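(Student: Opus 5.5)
We will use the two-level framework of \cite{xu2017algebraic}, with a smoother $\mathsf{M}$ (e.g.\ Jacobi or a block-diagonal smoother spectrally equivalent to $\mathsf{S}$) and the coarse correction $\mathsf{R}_{0}^{\intercal}\mathsf{A}_{0}^{\dagger}\mathsf{R}_{0}$. In that framework the two-grid error operator satisfies
\[
  \|E\|_{\mathsf{A}}^{2}= 1-\frac{1}{K(\Pi)},\qquad K(\Pi)=\sup_{x\neq 0}\frac{\|(\mathsf{I}-\Pi)x\|_{\widetilde{\mathsf{M}}}^{2}}{\|x\|_{\mathsf{A}}^{2}},
\]
where $\widetilde{\mathsf{M}}$ is the symmetrized smoother and $\Pi$ may be any projection onto $W_{H}^{\text{c}}$. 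An upper bound on $K$ therefore suffices, and the question reduces to a weak approximation property of the coarse space. We assume $\widetilde{\mathsf{M}}\le c_{s}\mathsf{S}$ for a constant $c_{s}$. This holds for Jacobi-type smoothers, since $\mathsf{S}$ is the diagonal of the Neumann-modified local blocks and these are bounded by the true diagonal of $\mathsf{A}$.

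\textbf{Step 1: choice of $\Pi$.} We take $\Pi$ to be the $\mathsf{S}$-orthogonal local projection. For $x_{i}=\mathsf{R}_{i}x$, let $\Pi_{i}x_{i}=\sum_{k\le m_i}(\boldsymbol{\psi}_{k}^{(i)})^{T}\mathsf{S}_{i}x_{i}\,\boldsymbol{\psi}_{k}^{(i)}$ with $\mathsf{S}_{i}$-normalized eigenvectors, and set $\Pi x=\sum_{i}\mathsf{R}_{i}^{\intercal}\Pi_{i}x_{i}\in W_{H}^{\text{c}}$.

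\textbf{Step 2: local estimate.} Expanding $x_{i}$ in the $\mathsf{S}_{i}$-orthonormal eigenbasis of \eqref{eq:algebraic-eigenproblem} gives the standard spectral estimate
\[
  \|x_{i}-\Pi_{i}x_{i}\|_{\mathsf{S}_{i}}^{2}\le \frac{1}{\lambda_{m_i+1}^{(i)}}\|x_{i}\|_{\widehat{\mathsf{A}}_{i}}^{2}\le\frac{1}{\mu_{c}}\|x_{i}\|_{\widehat{\mathsf{A}}_{i}}^{2}.
\]

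\textbf{Step 3: globalize.} Since $(\mathsf{I}-\Pi)x=\sum_{i}\mathsf{R}_{i}^{\intercal}(x_{i}-\Pi_{i}x_{i})$, the first lemma (the $\mathsf{S}$-norm splits exactly over the non-overlapping partition) gives $\|(\mathsf{I}-\Pi)x\|_{\mathsf{S}}^{2}=\sum_{i}\|x_{i}-\Pi_{i}x_{i}\|_{\mathsf{S}_{i}}^{2}$. Applying Step 2 and then the second lemma,
\[
  \|(\mathsf{I}-\Pi)x\|_{\mathsf{S}}^{2}\le \mu_{c}^{-1}\|x\|_{\mathsf{A}}^{2}.
\]
Hence $K\le c_{s}/\mu_{c}$, and with $C=c_{s}$ we obtain $\|E\|_{\mathsf{A}}^{2}\le 1-\mu_{c}/C$.

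\textbf{Main obstacle.} The hard part is justifying the abstract identity and the constant linking $\widetilde{\mathsf{M}}$ to $\mathsf{S}$, since the preconditioner as written in the paper has no smoother. We will state explicitly that a smoother $\mathsf{M}$ with $\widetilde{\mathsf{M}}\lesssim\mathsf{S}$ is included, as in \cite{xu2017algebraic}. If instead we insist on the additive form, we must bound $\lambda_{\max}(\mathsf{S}^{-1}\mathsf{A})$ by a constant, using the fact that for M-matrices the diagonal of $\mathsf{A}$ dominates $\mathsf{A}$ up to a factor of 2. We also need to check that $\mathsf{S}$ restricted to each subdomain is positive, i.e.\ that no isolated nodes occur, so that the $\mathsf{S}$-norm is a genuine norm.
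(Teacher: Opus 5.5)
Your argument follows the paper's proof step for step. You use the same $\mathsf{S}_i$-orthogonal projections onto the retained eigenvectors, the same spectral estimate $\|x_i-\Pi_i x_i\|_{\mathsf{S}_i}^2\le \mu_c^{-1}\|x_i\|_{\widehat{\mathsf{A}}_i}^2$, the same globalization through the two lemmas, and the same Xu--Zikatanov two-grid identity with the smoother absorbed into a constant. Strictly, that identity holds for the $\widetilde{\mathsf{M}}$-orthogonal projection (the minimum over $x_c$). For your $\Pi$ it is only an inequality, which is all you use. Your explicit constant $c_s$ is what the paper's ``$C$ depending on the symmetrized smoother'' has to mean. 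Your remark that an isolated node makes $\mathsf{S}_i$ singular is a real point the paper omits.

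The step that is wrong is your justification that Jacobi-type smoothers satisfy $\widetilde{\mathsf{M}}\le c_s\mathsf{S}$. What you observe, $\mathsf{S}_{jj}=\sum_{k\in\Omega_{i(j)},\,k\neq j}(-\mathsf{A}_{jk})\le \mathsf{A}_{jj}$, gives $\mathsf{S}\le\operatorname{diag}(\mathsf{A})$. That is the wrong direction. For damped Jacobi, $\widetilde{\mathsf{M}}$ is spectrally equivalent to $\operatorname{diag}(\mathsf{A})$, so you need $\operatorname{diag}(\mathsf{A})\le c_s\mathsf{S}$. In words, every node must keep a fixed fraction of its full diagonal weight $\mathsf{A}_{jj}$ (off-diagonal couplings plus any positive row sum) inside its own subdomain.

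Nothing in the graph-partition-plus-Neumann construction guarantees this. Take a cell in a high-conductivity channel whose channel neighbours were assigned to other subdomains: its in-subdomain couplings are $O(1)$, so $\mathsf{A}_{jj}/\mathsf{S}_{jj}$ is of the order of the contrast.

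Another smoother cannot fix this. Any $\mathsf{A}$-convergent smoother has $\widetilde{\mathsf{M}}\ge\mathsf{A}$, so $\widetilde{\mathsf{M}}\le c_s\mathsf{S}$ forces $c_s\ge\max_j \mathsf{A}_{jj}/\mathsf{S}_{jj}$. The additive variant you mention has the same problem: $\mathsf{A}\le 2\operatorname{diag}(\mathsf{A})$ and $\mathsf{S}\le\operatorname{diag}(\mathsf{A})$ give no bound on $\lambda_{\max}(\mathsf{S}^{-1}\mathsf{A})$.

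Your proof is therefore valid as a conditional argument. However, $\widetilde{\mathsf{M}}\le c_s\mathsf{S}$, with $c_s$ depending on the partition and the coefficients, must be stated as a hypothesis rather than derived. The paper's proof makes exactly this assumption silently, so neither argument shows that the ``fixed constant'' $C$ is independent of the contrast.
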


  \begin{proof}
    For each subdomain $\Omega_{i}$, let $\Pi_{i}$ be the $\mathsf{S}_{i}$-orthogonal
    projection onto the span of the selected eigenvectors
    $\{\boldsymbol{\psi}_{1}^{(i)}, \dots, \boldsymbol{\psi}_{m_i}^{(i)}\}$. For
    any $x \in \mathbb{R}^{n}$, choose the decomposition
    $x = \sum_{i=1}^{N}\mathsf{R}_{i}^{\intercal}x_{i}$ given by the previous
    lemma, and define the coarse component by
    \[
      x_{c}\coloneqq \sum_{i=1}^{N}\mathsf{R}_{i}^{\intercal}\Pi_{i}x_{i}\in W_{H}
      ^{\text{c}}.
    \]

    By the spectral characterization of \cref{eq:algebraic-eigenproblem}, the part
    orthogonal to the selected local eigenspace satisfies
    \[
      \|x_{i}- \Pi_{i}x_{i}\|_{\mathsf{S}_{i}}^{2}\le \frac{1}{\lambda_{m_i+1}^{(i)}}
      \|x_{i}\|_{\widehat{\mathsf{A}}_{i}}^{2}\le \frac{1}{\mu_{c}}\|x_{i}\|_{\widehat{\mathsf{A}}_{i}}
      ^{2}.
    \]
    Therefore, using the two previous lemmas,
    \begin{align*}
      \|x - x_{c}\|_{\mathsf{S}}^{2} & = \left\|\sum_{i=1}^{N}\mathsf{R}_{i}^{\intercal}(x_{i}- \Pi_{i}x_{i})\right\|_{\mathsf{S}}^{2} \\
                                     & = \sum_{i=1}^{N}\|x_{i}- \Pi_{i}x_{i}\|_{\mathsf{S}_{i}}^{2}                                    \\
                                     & \le \frac{1}{\mu_{c}}\sum_{i=1}^{N}\|x_{i}\|_{\widehat{\mathsf{A}}_{i}}^{2}                     \\
                                     & \le \frac{1}{\mu_{c}}\|x\|_{\mathsf{A}}^{2}.
    \end{align*}

    Following \cite{xu2017algebraic}, the convergence rate of the two-level method
    is
    \[
      \|E\|_{\mathsf{A}}^{2}= 1 - \frac{1}{CK},
    \]
    where $K = \max_{x \in \mathbb{R}^n}\min_{x_{c}\in W_{H}^{\text{c}}}\frac{\|x - x_{c}\|_{\mathsf{S}}^{2}}{\|x\|_{\mathsf{A}}^{2}}$ and $C$ is a constant depending on the largest eigenvalue of the symmetrized smoother.

    % Hence the approximation and stability assumptions of the abstract theory are
    % satisfied with constant
    By the above inequality $K = 1/\mu_{c}$. Combining the above results we have
    \[
      \|E\|_{\mathsf{A}}^{2}= 1 - \frac{1}{CK}\le 1 - \frac{\mu_{c}}{C}.
    \]
    This completes the proof.
  \end{proof}

  Combining \Cref{thm:condition} and \Cref{thm:convergence}, we know that
  the condition number of the preconditioned system is bounded above by a constant.

  \section{Numerical Experiments}
  \label{sec:experiments} In this section, we present a series of numerical
  experiments to demonstrate the effectiveness and robustness of the proposed
  algebraic multiscale preconditioner. For the numerical tests, we consider the
  steady-state heat conduction problem within the unit cube $\Omega = [0,1]^{3}$,
  governed by the following partial differential equation with homogeneous Neumann
  boundary conditions:
  \begin{equation}
    \left\{
    \begin{aligned}
      -\text{div}(\kappa\nabla T)     & =f\quad  &  & \text{in} \quad \Omega,        \\
      \kappa\nabla T \cdot \mathsf{n} & = 0\quad &  & \text{on}\quad \partial\Omega,
    \end{aligned}
    \right.
  \end{equation}
  To discretize the model problem, we employ the lowest-order Raviart-Thomas ($RT
  _{0}$) mixed finite element method \cite{boffi2013mixed}. The resulting saddle-point
  system is then reduced to a symmetric positive definite system for the
  pressure variables via a velocity elimination technique
  \cite{arbogast1997mixed,arbogast1997mixed,russell1983finite,chen2020generalized}.
  Technically, we use the MFEM library \cite{anderson2021mfem} to handle the
  mesh and finite element discretization.

  Our implementation is fully parallelized using the Message Passing Interface (MPI).
  The graph associated with the system matrix is first partitioned among the MPI
  processes. Each MPI process owns a local portion of the graph together with
  the corresponding rows of the distributed linear system. We denote the total number
  of processes by $\mathsf{proc}$. For the construction of the multiscale coarse
  space, each process further partitions its local subgraph into $\mathsf{sd}$ smaller
  subdomains using the METIS graph partitioning library \cite{karypis1997metis, karypis1997parmetis}.
  Consequently, the global graph is decomposed into a total of $N=\mathsf{proc}\times
  \mathsf{sd}$ non-overlapping subdomains, which define the coarse partition
  used by our method. The partitioning is designed to balance the computational workload
  across processes while keeping the number of cut edges small, thereby reducing
  communication overhead and improving parallel efficiency.

  The local generalized eigenvalue problems \cref{eq:algebraic-eigenproblem} on
  each subdomain are solved using SLEPc \cite{hernandez2005slepc} to construct
  the multiscale basis functions. More specifically, to accelerate the local
  spectral computations, each generalized eigenvalue problem is transformed into
  an equivalent standard symmetric eigenvalue problem through a Cholesky factorization
  of the weighting matrix. From each subdomain, we retain the first $L_{*}$ eigenvectors
  associated with the smallest eigenvalues, since these modes capture the
  dominant local low-energy features that must be represented in the coarse
  space. The resulting coarse-scale system is then solved directly by the parallel
  sparse direct solver $\text{SUPERLU\_DIST}$ \cite{li2003superlu_dist}. On the
  fine scale, we employ the generalized minimal residual method (GMRES) preconditioned
  by the proposed two-level method. The iteration is terminated once the
  relative residual has been reduced by a factor of $10^{-6}$. Throughout the numerical
  experiments, we report the iteration count as $\mathtt{iter}$ and denote the
  total number of degrees of freedom by DoF.

  All computations are performed on a high-performance computing (HPC) cluster. Each
  node in the cluster is equipped with dual Intel\textsuperscript{\textregistered}
  Xeon\textsuperscript{\textregistered} Gold 6258R CPUs (totaling 56 cores) and 192GB
  of memory, interconnected by an Infiniband network. The source code for our implementation
  is publicly available on GitHub\footnote{\href{https://github.com/pentaery/Algebraic-2G}{https://github.com/pentaery/Algebraic-2G}}.

  \subsection{Contrast robustness}
  In this subsection, we test our preconditioner on media containing long channels,
  as shown in \Cref{fig:permeability1}. We fix the coefficient $\kappa = 1$ in the
  background region and increase the coefficient in the high-conductivity structures
  to examine robustness with respect to contrast ratios $10^{\mathsf{cr}}$. We
  consider three problem sizes with $\mathtt{DoF}=128^{3}$, $256^{3}$, and
  $512^{3}$.

  The results are summarized in
  \Cref{tab:comparison_contrast_128,tab:comparison_contrast_256,tab:comparison_contrast_512}.
  Several clear trends can be read from these tables. First, for the proposed method,
  the setup cost is only weakly affected by the contrast ratio because the
  number of local eigenpairs and the coarse-space construction parameters are
  kept fixed. This is particularly visible for $\mathtt{DoF}=128^{3}$, where the
  setup time remains $0.3$ seconds for all three contrasts, and for $\mathtt{DoF}
  =256^{3}$, where it varies only between $0.7$ and $0.8$ seconds. More importantly,
  the iteration counts remain almost unchanged on these two grids: they vary only
  from $43$ to $45$ for $128^{3}$ and from $51$ to $54$ for $256^{3}$ as the contrast
  increases from $10^{3}$ to $10^{5}$. As a result, the total solution time stays
  in a narrow range, namely $0.4$--$0.7$ seconds for $128^{3}$ and $1.3$--$1.8$
  seconds for $256^{3}$. This indicates that, for these problem sizes, the local
  spectral coarse space already captures the dominant low-energy modes induced by
  the high-conductivity channels.

  The behavior of PCGAMG is markedly different. Its setup time changes only
  moderately with contrast, but its solve time and iteration count deteriorate rapidly
  as the contrast increases, showing that the main difficulty lies in the
  quality of the coarse correction rather than in the setup stage. For $\mathtt{DoF}
  =128^{3}$, the iteration count increases from $63$ to $293$ and the total time
  rises from $0.6$ to $1.4$ seconds. For $\mathtt{DoF}=256^{3}$, the
  deterioration is much stronger: the iteration count grows from $78$ to $474$,
  the solve time increases from $1.0$ to $5.7$ seconds, and the total time rises
  from $1.7$ to $6.2$ seconds. In contrast, our method keeps the total time at $1
  .3$, $1.8$, and $1.6$ seconds for the same three contrasts, corresponding to a
  reduction of about $24\%$, $54\%$, and $74\%$, respectively. These data show
  that the proposed multiscale preconditioner is substantially less sensitive to
  coefficient contrast than standard AMG.

  The largest problem size, $\mathtt{DoF}=512^{3}$, reveals a more nuanced but
  also more informative picture. For contrasts $10^{3}$ and $10^{4}$, our method
  still achieves very small iteration counts, namely $31$ and $32$, compared
  with $60$ and $203$ for PCGAMG. However, at the lower contrast $10^{3}$, the cheaper
  AMG setup makes PCGAMG slightly faster overall ($10.4$ seconds versus $11.5$
  seconds), even though our method requires about half as many iterations. This shows
  that, in relatively easier regimes, the additional cost of the local eigenvalue
  solves may not yet be amortized. When the contrast is increased to $10^{4}$, the
  advantage of the proposed method becomes clear: the total time drops from
  $28.3$ seconds for PCGAMG to $15.6$ seconds for our method. At the extreme contrast
  $10^{5}$, the iteration count of our method increases to $160$, which indicates
  some loss of contrast-independence on the largest grid under this fixed parameter
  choice. Nevertheless, it still significantly outperforms PCGAMG, which
  requires $470$ iterations and $61.3$ seconds in total, whereas our method needs
  $40.7$ seconds. Therefore, the results suggest that the proposed method is robust
  over a broad range of contrasts and problem sizes, and that even in the most
  difficult tested regime it remains substantially more effective than standard
  AMG, although a richer coarse space may be beneficial for the most extreme case.

  \begin{figure*}[!ht]
    \centering
    \begin{subfigure}
      [b]{0.45\textwidth}
      \includegraphics[width=\textwidth]{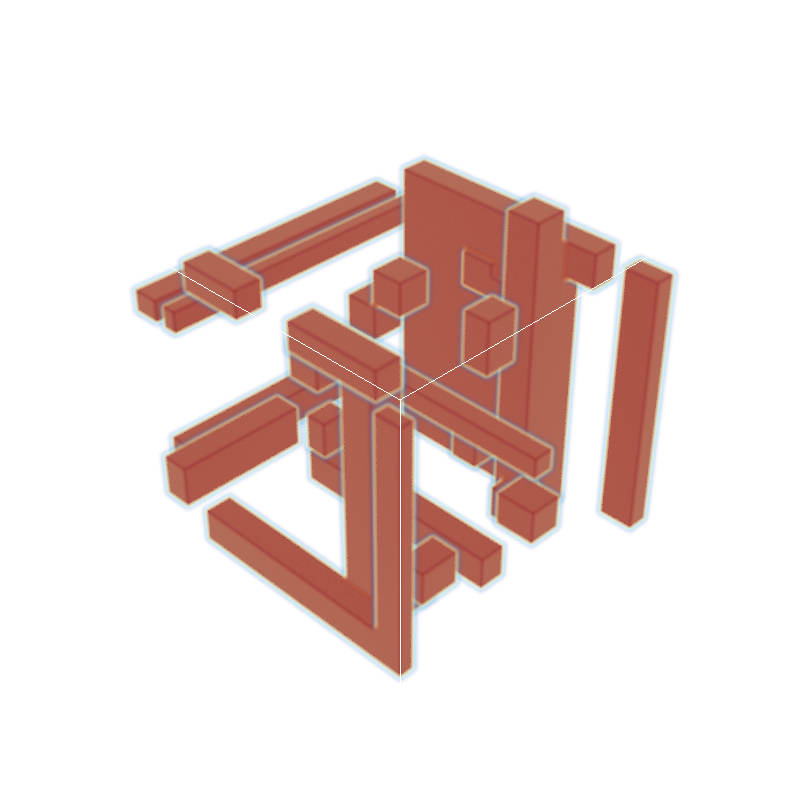}
      \caption{}
    \end{subfigure}
    \hfill
    \begin{subfigure}
      [b]{0.45\textwidth}
      \includegraphics[width=\textwidth]{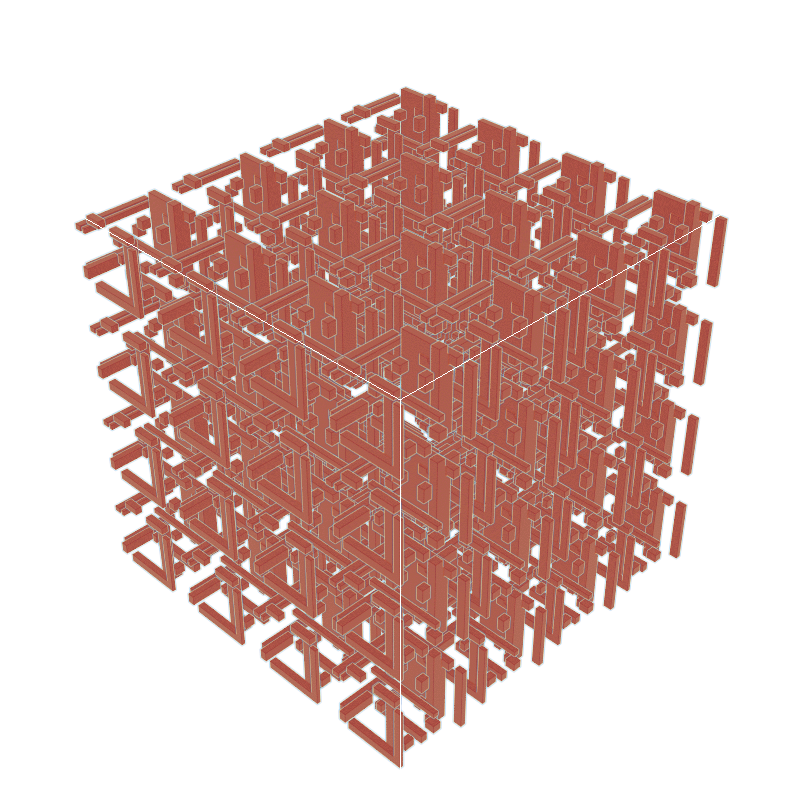}
      \caption{}
    \end{subfigure}
    \caption{First kind of permeability field with long channels.}
    \label{fig:permeability1}
  \end{figure*}

  \begin{figure*}[!ht]
    \centering
    \begin{subfigure}
      [b]{0.45\textwidth}
      \includegraphics[width=\textwidth]{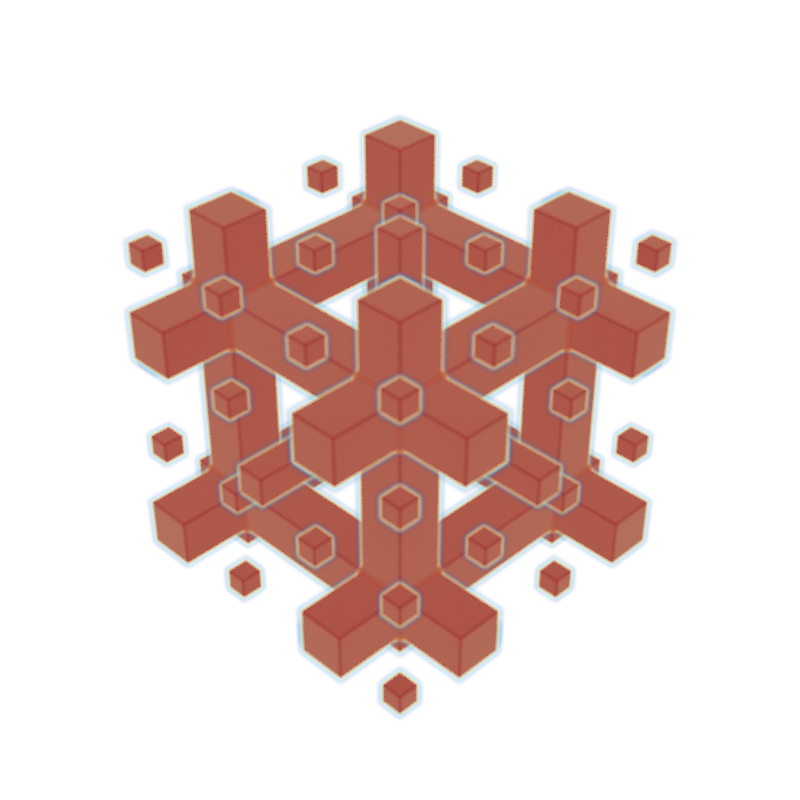}
      \caption{}
    \end{subfigure}
    \hfill
    \begin{subfigure}
      [b]{0.45\textwidth}
      \includegraphics[width=\textwidth]{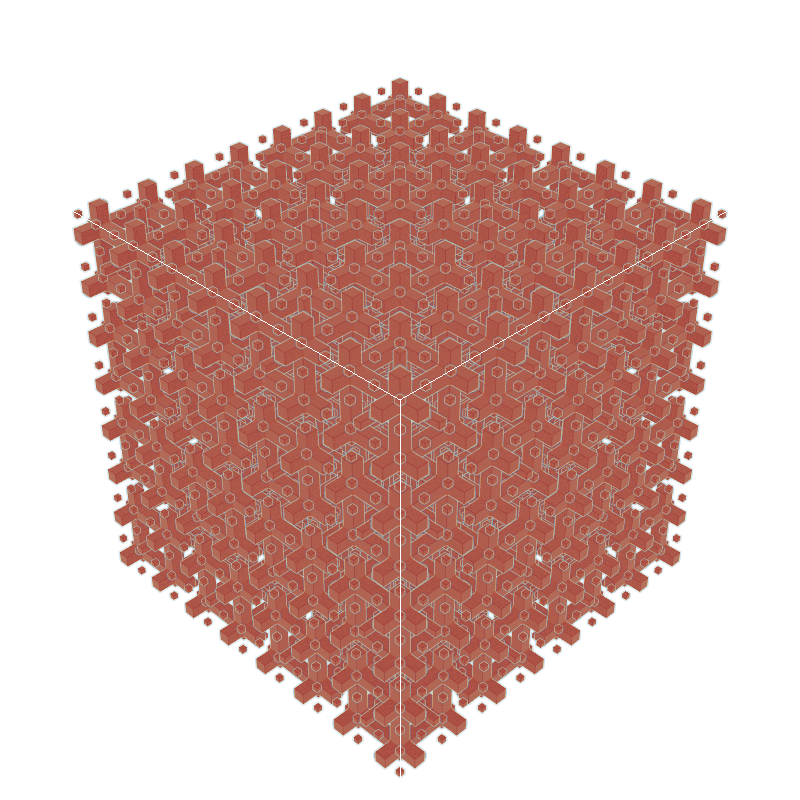}
      \caption{}
    \end{subfigure}
    \caption{Second kind of permeability field with fractures.}
    \label{fig:permeability2}
  \end{figure*}

  \begin{table*}
    [!ht] \footnotesize
    \caption{Records of elapsed wall time and iteration numbers of AMG and the
    proposed preconditioner w.r.t.\ different contrast ratios ($\mathtt{cr}$).
    The parameters for the proposed method are fixed, and $\mathtt{DoF}=128^{3}$,
    we use 560 MPI processes.}
    \label{tab:comparison_contrast_128}
    \begin{center}
      \makegapedcells
      \begin{tabular}{c c c c c c}
        \toprule Contrast                  & Preconditioner & Setup (s) & Solve (s) & Iterations & Total (s) \\
        \midrule \multirow{2}{*}{$10^{3}$} & PCGAMG         & $0.1$     & $0.5$     & $63$       & $0.6$     \\
                                           & Ours           & $0.3$     & $0.1$     & $43$       & $0.4$     \\
        \midrule \multirow{2}{*}{$10^{4}$} & PCGAMG         & $0.5$     & $0.7$     & $108$      & $1.2$     \\
                                           & Ours           & $0.3$     & $0.4$     & $44$       & $0.7$     \\
        \midrule \multirow{2}{*}{$10^{5}$} & PCGAMG         & $0.5$     & $0.9$     & $293$      & $1.4$     \\
                                           & Ours           & $0.3$     & $0.4$     & $45$       & $0.7$     \\
        \bottomrule
      \end{tabular}
    \end{center}
  \end{table*}

  \begin{table*}
    [!ht] \footnotesize
    \caption{Records of elapsed wall time and iteration numbers of AMG and the
    proposed preconditioner w.r.t.\ different contrast ratios ($\mathtt{cr}$).
    The parameters for the proposed method are fixed, and $\mathtt{DoF}=256^{3}$,
    we use 560 MPI processes.}
    \label{tab:comparison_contrast_256}
    \begin{center}
      \makegapedcells
      \begin{tabular}{c c c c c c}
        \toprule Contrast                  & Preconditioner & Setup (s) & Solve (s) & Iterations & Total (s) \\
        \midrule \multirow{2}{*}{$10^{3}$} & PCGAMG         & $0.7$     & $1.0$     & $78$       & $1.7$     \\
                                           & Ours           & $0.7$     & $0.6$     & $51$       & $1.3$     \\
        \midrule \multirow{2}{*}{$10^{4}$} & PCGAMG         & $0.8$     & $3.1$     & $235$      & $3.9$     \\
                                           & Ours           & $0.8$     & $1.0$     & $53$       & $1.8$     \\
        \midrule \multirow{2}{*}{$10^{5}$} & PCGAMG         & $0.5$     & $5.7$     & $474$      & $6.2$     \\
                                           & Ours           & $0.7$     & $0.9$     & $54$       & $1.6$     \\
        \bottomrule
      \end{tabular}
    \end{center}
  \end{table*}

  \begin{table*}
    [!ht] \footnotesize
    \caption{Records of elapsed wall time and iteration numbers of AMG and the
    proposed preconditioner w.r.t.\ different contrast ratios ($\mathtt{cr}$).
    The parameters for the proposed method are fixed, and $\mathtt{DoF}=512^{3}$,
    we use 560 MPI processes.}
    \label{tab:comparison_contrast_512}
    \begin{center}
      \makegapedcells
      \begin{tabular}{c c c c c c}
        \toprule Contrast                  & Preconditioner & Setup (s) & Solve (s) & Iterations & Total (s) \\
        \midrule \multirow{2}{*}{$10^{3}$} & PCGAMG         & $2.9$     & $7.5$     & $60$       & $10.4$    \\
                                           & Ours           & $6.5$     & $5.0$     & $31$       & $11.5$    \\
        \midrule \multirow{2}{*}{$10^{4}$} & PCGAMG         & $4.3$     & $24.0$    & $203$      & $28.3$    \\
                                           & Ours           & $10.0$    & $5.6$     & $32$       & $15.6$    \\
        \midrule \multirow{2}{*}{$10^{5}$} & PCGAMG         & $3.1$     & $58.2$    & $470$      & $61.3$    \\
                                           & Ours           & $10.4$    & $29.3$    & $160$      & $40.7$    \\
        \bottomrule
      \end{tabular}
    \end{center}
  \end{table*}

  \subsection{Scalability test}
  \begin{figure*}[!ht]
    \centering
    \resizebox{\textwidth}{!}{\input{Scaling.pgf}}
    \caption{Strong and weak scalability results for the proposed preconditioner.
    The problem size is fixed at $512^{3}$ for the strong scalability test,
    while the number of degrees of freedom per processor is fixed at
    approximately $64^{3}$ for the weak scalability test.}
    \label{fig:scalability}
  \end{figure*}

  In this subsection, we investigate the parallel scalability of our proposed
  algebraic multiscale preconditioner. The coefficient field we use are shown in
  \Cref{fig:permeability2}. We conduct both strong and weak scalability tests, and
  the results are presented in \Cref{fig:scalability}. For these tests, we use the
  permeability field with fractures shown in \Cref{fig:permeability2} and set the
  contrast ratio to $10^{4}$.

  For the strong scalability test, we fix the total problem size with
  $\mathtt{DoF}= 512^{3}$ and increase the number of MPI processes from $27$ ($3^{3}$)
  to $512$ ($8^{3}$). As shown in the left panel of \Cref{fig:scalability}, the
  total elapsed time, which includes both setup and solve phases, decreases
  significantly as more processors are employed. The setup time (dark blue bars)
  remains relatively small and scales well, while the solve time (light blue bars)
  shows a substantial reduction. The number of GMRES iterations remains remarkably
  stable, indicating that the preconditioner's effectiveness is maintained as
  the number of subdomains increases. This demonstrates the excellent strong scalability
  of our method.

  For the weak scalability test, we keep the local problem size per processor
  approximately constant at $64^{3}$ DoF, while increasing the total problem size
  and the number of processors proportionally. The number of processors is
  varied from $1 25$ ($5^{3}$) to $512$ ($8^{3}$), with the corresponding total DoF
  growing from $320^{3}$ to $512^{3}$. The right panel of \Cref{fig:scalability}
  shows that the total elapsed time remains nearly constant as the problem size
  and processor count grow. Both the setup and solve times exhibit good weak scalability.
  The number of iterations shows only a very slight increase, confirming that the
  preconditioner's performance does not degrade as the problem scales up. These
  results affirm the robustness and efficiency of our parallel algebraic
  multiscale preconditioner for large-scale simulations.

  \subsection{Parameter test}

  An important advantage of choosing $\mathsf{S}_{i}$ to be diagonal in \Cref{eq:algebraic-eigenproblem}
  is that the generalized eigenvalue problem can be transformed explicitly into a
  standard symmetric eigenvalue problem, namely,
  \[
    \mathsf{S}_{i}^{-1/2}\widehat{\mathsf{A}}_{i}\mathsf{S}_{i}^{-1/2}\mathsf{z}=
    \lambda \mathsf{z}, \quad \text{with}\quad \boldsymbol{\psi}=\mathsf{S}_{i}^{-1/2}
    \mathsf{z},
  \]
  which is computationally more efficient.
  \begin{table*}
    [!ht]
    \begin{center}
      \caption{Elapsed wall time (in seconds) for solving the local eigenvalue
      problems using the generalized eigenvalue problem solver (GHEP) and the
      standard eigenvalue problem solver (HEP). The tests are conducted on multiple
      processors and the times are averaged.}
      \label{tab:GHEPvsHEP}
      \begin{tabular}{cccc}
        \toprule size  & $20^{3}$ & $30^{3}$ & $40^{3}$ \\
        \midrule $HEP$ & $0.1$    & $0.9$    & $3.2$    \\
        $GHEP$         & $0.5$    & $3.4$    & $13.0$   \\
        \bottomrule
      \end{tabular}
    \end{center}
  \end{table*}

  \Cref{tab:GHEPvsHEP} confirms that this reformulation yields a clear practical
  advantage. For all tested local problem sizes, solving the transformed standard
  symmetric eigenvalue problem is about four to five times faster than solving the
  original generalized problem directly. For example, at size $40^{3}$, the
  average time is reduced from $13.0$ seconds for GHEP to $3.2$ seconds for HEP.
  The main reason is that, after diagonal scaling, the eigensolver only needs to
  treat a single symmetric operator, so it can use more efficient algorithms. In
  contrast, the generalized formulation must work with the matrix pencil
  $(\widehat{\mathsf{A}}_{i},\mathsf{S}_{i})$, which introduces additional overhead
  in normalization and projection steps even when $\mathsf{S}_{i}$ is diagonal.
  Since applying $\mathsf{S}_{i}^{-1/2}$ is inexpensive, converting the problem to
  HEP removes this extra generalized-eigensolver cost without changing the spectrum,
  which explains the consistently better timings reported in
  \Cref{tab:GHEPvsHEP}.

  The main computational burden of our preconditioner lies in the balance
  between the eigenvalue solver and the coarse direct solver. The former is responsible
  for constructing a robust coarse space, while the latter ensures that the
  coarse problem can be solved efficiently. In this subsection, we will test the
  performance of our preconditioner with different parameters, including the number
  of subdomains per processor $\mathtt{sd}$, the number of selected eigenvectors
  per subdomain $L_{*}$ and the number of subdomains $\mathsf{sd}$. We fix $\mathtt{cr}
  =4$ and $\mathtt{DoF}=256^{3}$.

  \begin{figure*}[!ht]
    \centering
    \resizebox{\textwidth}{!}{\input{table_lstar_vs_sd_by_sd.pgf}}
    \caption{Strong and weak scalability results for the proposed preconditioner.
    The problem size is fixed at $512^{3}$ for the strong scalability test,
    while the number of degrees of freedom per processor is fixed at
    approximately $64^{3}$ for the weak scalability test.}
    \label{fig:lstar-vs-sd}
  \end{figure*}

  The results in \Cref{fig:lstar-vs-sd} clearly show the trade-off between the
  local eigenvalue solves and the coarse direct solve. For a fixed $\mathtt{sd}$,
  increasing $L_{*}$ always reduces the number of GMRES iterations, since a richer
  coarse space captures more low-energy modes. However, this does not necessarily
  improve the overall runtime. For example, when $\mathtt{sd}=2^{3}$, the
  iteration count decreases from $43$ to $26$ as $L_{*}$ increases from $3$ to
  $7$, but the total time increases from $1.8$ s to $2.7$ s because the cost of the
  coarse problem grows steadily. A similar trend is even more pronounced for
  $\mathtt{sd}=3^{3}$ and $4^{3}$, where the additional reduction in iterations is
  outweighed by the rapidly increasing direct-solver time.

  The number of subdomains per processor also has a non-monotone influence on the
  performance. Moving from $\mathtt{sd}=1^{3}$ to $\mathtt{sd}=2^{3}$ yields a
  substantial speedup: the eigensolver time drops from about $6.4$ s to $0.6$ s,
  while the iteration count is also reduced significantly. In contrast, choosing
  too many subdomains enlarges the global coarse problem and makes the direct
  solve the dominant cost. For instance, with $L_{*}=5$, the total time is only $2
  .3$ s for $\mathtt{sd}=2^{3}$, but it increases to $5.6$ s and $10.6$ s for
  $\mathtt{sd}=3^{3}$ and $4^{3}$, respectively. Overall, these results indicate
  that a moderate partition together with a small number of local eigenvectors provides
  the best balance between robustness and efficiency. In particular, $\mathtt{sd}
  =2^{3}$ with $L_{*}=3$ or $4$ appears to be the most cost-effective choice for
  this test.

  \section{Conclusion}
  \label{sec:conclusion}

  In this paper, we developed a two-level algebraic multiscale preconditioner for
  large sparse symmetric positive definite systems arising from elliptic problems
  with highly heterogeneous coefficients. The proposed method constructs a
  problem-dependent coarse space in a fully algebraic manner by combining graph
  partitioning with local spectral basis functions. As a result, it does not
  require geometric grids or mesh information and is therefore well suited to
  unstructured discretizations and matrix-only settings. In addition, the local
  nature of the construction makes the method naturally parallelizable.

  Numerical experiments for heterogeneous Darcy flow problems show that the
  proposed preconditioner is robust with respect to coefficient contrast and
  problem size. Compared with standard algebraic multigrid, it delivers smaller
  iteration counts and better overall performance for challenging large-scale
  problems, while maintaining good strong and weak scalability. These results
  indicate that the algebraic multiscale coarse space is effective in capturing
  the low-energy error components that hinder the convergence of classical
  iterative solvers.

  Future work will focus on extending the present two-level framework to
  multilevel variants, developing adaptive strategies for selecting local basis functions,
  and applying the method to a broader class of partial differential equations
  and coupled multiphysics problems.

  %% The Appendices part is started with the command \appendix;
  %% appendix sections are then done as normal sections
  %% \appendix

  %% \label{}

  %% If you have bibdatabase file and want bibtex to generate the
  %% bibitems, please use
  %%
  \bibliographystyle{elsarticle-num}
  \bibliography{refs.bib}
\end{document}